\documentclass[a4paper]{article}
\usepackage[english]{babel}
\usepackage[cp1251]{inputenc}
\pdfoutput=1
\usepackage[colorlinks,linkcolor=blue,citecolor=blue,urlcolor=blue]{hyperref}
\usepackage{latexsym,amsfonts,amssymb,amsthm,amsmath,amsbsy,graphicx}
\setlength{\textwidth}{135mm}\setlength{\textheight}{222mm}
\hoffset=-5mm\voffset=-3mm

\newtheorem{cor}{Corollary}[section]
\newtheorem{prp}{Proposition}[section]
\theoremstyle{definition}
\newtheorem{exm}{Example}[section]

\begin{document}
\thispagestyle{empty}
\bigskip
 {\noindent \Large\bf\sc  the unified
form of pollaczek--khinchine formula for l\'evy processes with matrix-exponential\\
negative jumps}\footnotemark[1] \footnotetext{This is an
electronic version of the original article published in Theory of Stochastic Processes, 
Vol.~18(34), no.~2, 2012.}

\bigskip
 { \bf D. Gusak,\footnote{Institute of Mathematics, Ukrainian National
 Academy of Science, 3 Tereshenkivska str.,
01601 Kyiv, Ukraine. \href{mailto:gusak\_dv@mail.ru}{gusak\_dv@mail.ru}}
Ie.~Karnaukh
 \footnote{Department of Statistics and Probability Theory,
O. Honchar Dnipropetrovsk National University, 72, Gagarina Pr.,
Dnipropetrovsk 49010, Ukraine.
 \href{mailto:ievgen.karnaukh@gmail.com}{ievgen.karnaukh@gmail.com}
 }}
\begin{center}
\bigskip
\begin{quotation}
\noindent  {\small For L\'evy processes with matrix-exponential negative jumps, the unified
form of the Pollaczek-Khinchine formula is established.}
\end{quotation}
\end{center}
\bigskip

\section{Introduction}

The celebrated Pollaczek-Khinchine (PK) formula is known in queuing
theory and risk theory in different forms. Following \cite{Asmussen2003},
under the safety loading condition, PK formula represents the cumulative
distribution function of the waiting time $W$ of an ordinary $M/G/1$
queue in terms of the arrival rate $\lambda,$ the cumulative distribution
function $F\left(x\right)$ of the service time $Y,$ and its mean
as
\begin{equation}
\mathsf{P}\left\{ W<x\right\} =\left(1-\rho\right)\sum_{n=0}^{\infty}\rho^{n}F_{0}^{*n}\left(x\right),\label{eq:1.2}
\end{equation}
where $\rho=\lambda\mathsf{E}Y,$ $F_{0}\left(x\right)=\left(\mathsf{E}Y\right)^{-1}\int_{0}^{x}\overline{F}(y)dy,$
$\overline{F}(y)=1-F\left(y\right)$, $F_{0}^{*n}$ denotes the convolution
of the distribution $F_{0}$ with itself. Following \cite{Asmussen2010},
formula (\ref{eq:1.2}) in risk theory is called the Beekman's convolution
formula and represents the survival probability for the classical
risk process (with $\rho=\lambda\mathsf{E}Y/c,$ where $c$ is the
premium rate).

Formula (\ref{eq:1.2}) is a special case of a more general result
on the distribution of the maximum of a spectrally positive L\'evy process.
If $X_{t},t\geq0$ is a L\'evy process with no negative jumps and the
cumulant function $k\left(r\right)=\ln\mathsf{E}e^{rX_{1}},$ then
the moment generating function for the absolute maximum $X^{+}=\sup_{0\leq t<\infty}X_{t}$
has the following representation (see, e.g., \cite{Kyprianou2006}):
\begin{equation}
\mathsf{E}e^{rX^{+}}=-\frac{rk'\left(0+\right)}{k\left(r\right)}.\label{eq:1.3}
\end{equation}
Inverting formula (\ref{eq:1.3}) with respect to $r,$ we can get
the generalization of formula (\ref{eq:1.2}) (see \cite{Kella2011}).
For the interpretation of the generalized PK for general perturbed
risk processes under the Cramer--Lundberg conditions, see \cite{Huzak2004}
and references therein.

In \cite{Husak2010a}, the analog of formula (\ref{eq:1.3}) was obtained
without specific restrictions on the distribution of negative jumps.
Using this analog, the generalization of~\eqref{eq:1.2} for risk
processes with exponentially distributed premiums (with parameter
$b>0$) can be represented in terms of the convolutions of $\overline{F}_{0}(x)=\overline{F}(x)+b\overline{\overline{F}}(x)$
(see~\cite{Husak2011engl} Table IV in appendix). For some alternative
representations of the supremum distribution for a L\'evy process, when
the L\'evy measure is nonzero on $\left(-\infty,0\right),$ see \cite{Kwasnicki2012,Chaumont2010,Kuznetsov}
and references therein.

In this article, we combine the results of \cite{Bratiychuk1990,Lewis2008,Husak2010a}
to get the L\'evy triplet for the supremum of a L\'evy process, when the
negative jumps have a matrix-exponential distribution. We consider
two special cases of matrix-exponential distributions, namely the
Erlang and hyperexponential ones, in detail.

\section{The unified form of the Pollaczek--Khinchine formula}
Consider a L\'evy process $X_{t},t\geq0$ with the cumulant function
\[
k\left(r\right)=a'r+\frac{\sigma^{2}}{2}r^{2}+\int_{-\infty}^{\infty}\left(e^{rx}-1-rxI_{\left\{ |x|\leq1\right\} }\right)\Pi\left(dx\right),
\]
where $a'$ and $\sigma$ are real constants, and $\Pi$ is a non-negative
measure defined on $R\backslash\{0\}$ such that $\int_{R}\max\left\{ x^{2},1\right\} \Pi\left(dx\right)<\infty.$
The components $\left\{ a',\sigma,\Pi\right\} $ are called a L\'evy
triplet. Hereinafter, we assume that $\gamma=\int_{-1}^{1}|x|\Pi\left(dx\right)<\infty.$
Then the cumulant function has the form
\begin{equation}
k\left(r\right)=ar+\frac{\sigma^{2}}{2}r^{2}+\int_{-\infty}^{\infty}\left(e^{rx}-1\right)\Pi\left(dx\right),\label{eq:2.1}
\end{equation}
where $a=a'-\gamma.$

We write $X_{t}^{+}$ and $X_{t}^{-}$ for the supremum and infimum
processes and $X^{+}$ and $X^{-}$ for the absolute extrema:
\[
X_{t}^{\pm}=\sup\left(\inf\right)_{0\leq t'\leq t}X_{t},X^{\pm}=\sup\left(\inf\right)_{0\leq t<\infty}X_{t}.
\]
Denote, by $\theta_{s},$ an exponentially distributed random variable
with parameter $s>0$: $\mathsf{P}\left\{ \theta_{s}>t\right\} =e^{-st},t>0,$
independent of $X_{t},$ and $\theta_{0}=\infty.$ Then $X_{\theta_{s}}$
is called a L\'evy process killed at the rate $s$ (see \cite{Bertoin1996}),
and its moment generating function is as follows:
\[
\mathsf{E}e^{rX_{\theta_{s}}}=\frac{s}{s-k\left(r\right)},\mathrm{Re}\left[r\right]=0.
\]
The moment generating functions of killed extrema $\mathsf{E}e^{rX_{\theta_{s}}^{\pm}}$
represent the Wiener--Hopf factors of $\mathsf{E}e^{rX_{\theta_{s}}},$
i.e.,
\[
\mathsf{E}e^{rX_{\theta_{s}}}=\mathsf{E}e^{rX_{\theta_{s}}^{+}}\mathsf{E}e^{rX_{\theta_{s}}^{-}}.
\]
Having a representation of one of the factors, we can get that for
another one.

In the general case, we can represent $\mathsf{E}e^{rX_{\theta_{s}}^{+}}$
in terms of the convolution of the cumulative distribution function
of the killed infimum $P_{-}\left(s,y\right)=\mathsf{P}\left\{ X_{\theta_{s}}^{-}<y\right\} $
with the integral transform of the measure $\Pi$: 
$\tilde{\Pi}\left(x,u\right)=\int_{x}^{\infty}e^{u\left(x-z\right)}\Pi\left(dz\right).$

In \cite{Husak2011engl} (see corollary 2.2), it was established that
if $\mathsf{D}X_{1}<\infty,\mathsf{E}X_{\theta_{s}}^{+}<\infty,$
then
\begin{equation}
\mathsf{E}e^{rX_{\theta_{s}}^{+}}=\left(1-r\left(A_{*}\left(s\right)+s^{-1}U\left(s,r\right)\right)\right)^{-1},\label{eq:2.3}
\end{equation}
where
\[
A_{*}\left(s\right)=\left\{ \begin{array}{lc}
\left(2s\right)^{-1}\sigma^{2}\left.\frac{\partial}{\partial y}P_{-}(s,y)\right|_{y=0} & \sigma>0,\\
s^{-1}\mathsf{P}\left\{ X_{\theta_{s}}^{-}=0\right\} \max\left\{ 0,a\right\}  & \sigma=0,
\end{array}\right.
\]
$U\left(s,r\right)=\int_{0}^{\infty}e^{rx}\int_{-\infty}^{0}\tilde{\Pi}\left(x-y,0\right)dP_{-}\left(s,y\right)dx.$

Integrating by parts, we get $U\left(s,r\right)=\int_{0}^{\infty}\left(e^{rx}-1\right)\int_{-\infty}^{0}{\Pi}\left(dx-y\right)dP_{-}\left(s,y\right).$
As was shown in~\cite{Husak2011engl}, if $\mu=\mathsf{E}X_{1}=k'\left(0\right)<0,$
then there exist $\lim_{s\rightarrow0}A_{*}\left(s\right)=a_{*}$
and
\begin{multline*}
\lim_{s\rightarrow0}s^{-1}U\left(s,r\right)=-\int_{0}^{\infty}\left(e^{rx}-1\right)\int_{-\infty}^{0}{\Pi}\left(dx-y\right)d\left(\int_{0}^{\infty}\mathsf{P}\left\{ X_{t}^{-}>y\right\} dt\right)=\\
=-\int_{0}^{\infty}\int_{-\infty}^{0}\left(e^{rx}-1\right){\Pi}\left(dx-y\right)d\mathsf{E}\tau^{-}(y),
\end{multline*}
where $\tau^{-}(y)=\inf\{t>0:X_{t}<y\},$ $y<0.$ Denote $p_{-}(0,y)=-\mathsf{E}\tau^{-}(y).$
Then we have the following statement. \begin{prp} \label{proposition1}
For a L\'evy process $X_{t},$ if $\mathsf{D}X_{1}<\infty,\mathsf{E}X_{\theta_{s}}^{+}<\infty,$
then
\begin{equation}
\mathsf{E}e^{rX_{\theta_{s}}^{+}}=\left(1-\left(rA_{*}\left(s\right)+\int_{0}^{\infty}\left(e^{rx}-1\right)\pi_{s}\left(dx\right)\right)\right)^{-1},\label{eq:2.3-1}
\end{equation}
where $\pi_{s}\left(dx\right)=\int_{-\infty}^{0}{\Pi}\left(dx-y\right)s^{-1}dP_{-}\left(s,y\right),x>0.$

If $\mu=\mathsf{E}X_{1}<0$ the moment generating function of absolute
supremum $X^{+}$ can be represented as the moment generating function
of the subordinator $X^{*}$ (a L\'evy process with bounded variation,
drift $a_{*}\geq0$ and measure $\Pi_{*}\left(dx\right)=\int_{-\infty}^{0}{\Pi}\left(dx-y\right)dp_{-}\left(0,y\right)$
is concentrated on $\left(0,\infty\right)$) killed at rate 1:
\begin{equation}
\mathsf{E}e^{rX^{+}}=\frac{1}{1-k_{*}\left(r\right)}=\mathsf{E}e^{rX_{\theta_{1}}^{*}}.\label{eq:2-10}
\end{equation}
\end{prp}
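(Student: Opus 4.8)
The identity \eqref{eq:2.3-1} is a matter of rewriting \eqref{eq:2.3}, with no limiting analysis, so I would dispose of it first. Into the bracket $1-r(A_*(s)+s^{-1}U(s,r))$ I would substitute the integration-by-parts expression for $U(s,r)$ recorded just above the proposition; the prefactor $r$ in \eqref{eq:2.3} cancels the factor $r^{-1}$ produced by that integration by parts, so that $rs^{-1}U(s,r)=s^{-1}\int_0^\infty(e^{rx}-1)\int_{-\infty}^0\Pi(dx-y)dP_-(s,y)$. By the definition $\pi_s(dx)=\int_{-\infty}^0\Pi(dx-y)s^{-1}dP_-(s,y)$ this equals $\int_0^\infty(e^{rx}-1)\pi_s(dx)$, and regrouping the surviving term $rA_*(s)$ gives \eqref{eq:2.3-1} verbatim.

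For \eqref{eq:2-10} the plan is to let $s\to0$ in \eqref{eq:2.3-1}. Since $\mu=\mathsf{E}X_1<0$ the process drifts to $-\infty$, so $X^+$ is a.s. finite and $X_{\theta_s}^+$ converges in distribution to $X^+$ as $\theta_s\uparrow\infty$; because $|e^{rX^+}|\le1$ for $\mathrm{Re}[r]\le0$, this forces $\mathsf{E}e^{rX_{\theta_s}^+}\to\mathsf{E}e^{rX^+}$. On the right-hand side I would use the two limits already recorded in the excerpt, namely $A_*(s)\to a_*$ and the stated limit of $s^{-1}U(s,r)$. In measure-theoretic terms the content of the latter is the vague convergence $s^{-1}dP_-(s,y)\to dp_-(0,y)$ on $(-\infty,0]$, which propagates to $\pi_s(dx)\to\Pi_*(dx)=\int_{-\infty}^0\Pi(dx-y)dp_-(0,y)$ and hence, after passing to the limit under the integral sign, to $\int_0^\infty(e^{rx}-1)\pi_s(dx)\to\int_0^\infty(e^{rx}-1)\Pi_*(dx)$.

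The step I expect to be the main obstacle is precisely this interchange of the limit $s\to0$ with the integrations. I would anchor it on the resolvent identity $\mathsf{P}\{X_{\theta_s}^-<y\}=\int_0^\infty se^{-st}\mathsf{P}\{X_t^-<y\}dt=1-\int_0^\infty se^{-st}\mathsf{P}\{X_t^->y\}dt$: dividing by $s$ and differentiating in $y$ removes the $y$-free term $s^{-1}$ and leaves $s^{-1}d_yP_-(s,y)=-d_y\int_0^\infty e^{-st}\mathsf{P}\{X_t^->y\}dt$. As $s\downarrow0$ monotone convergence gives $\int_0^\infty e^{-st}\mathsf{P}\{X_t^->y\}dt\uparrow\int_0^\infty\mathsf{P}\{X_t^->y\}dt=\mathsf{E}\tau^-(y)=-p_-(0,y)$, establishing the vague convergence; the remaining $x$-integration against $\Pi(dx-y)$ must then be controlled by a dominating function, for which the hypotheses $\mathsf{D}X_1<\infty$ and $\mathsf{E}X_{\theta_s}^+<\infty$ are exactly what guarantee integrability of $e^{rx}-1$ against the limiting measure and finiteness of the limiting exponent.

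It remains to recognise the limit as a killed subordinator. Writing $k_*(r)=ra_*+\int_0^\infty(e^{rx}-1)\Pi_*(dx)$, the passage to the limit yields $\mathsf{E}e^{rX^+}=(1-k_*(r))^{-1}$. Here $a_*\ge0$ as a limit of the nonnegative quantities $A_*(s)$, and $\Pi_*$ is a nonnegative measure carried by $(0,\infty)$: indeed $dp_-(0,y)=-d\mathsf{E}\tau^-(y)\ge0$ because $\mathsf{E}\tau^-(y)$ decreases as $y\uparrow0$, while for $x>0$ and $y<0$ the shift in $\Pi(dx-y)$ keeps the mass on the positive half-line. Thus $k_*$ is the cumulant of a bounded-variation subordinator $X^*$ with drift $a_*$ and Lévy measure $\Pi_*$, and comparing $(1-k_*(r))^{-1}$ with the killed-process formula $\mathsf{E}e^{rX_{\theta_s}}=s/(s-k(r))$ at $s=1$ identifies the right-hand side with $\mathsf{E}e^{rX_{\theta_1}^*}$, which is \eqref{eq:2-10}.
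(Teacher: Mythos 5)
Your proposal is correct and takes essentially the same route as the paper: rewrite \eqref{eq:2.3} by absorbing the prefactor $r$ into the integrated-by-parts form of $U\left(s,r\right)$ to get \eqref{eq:2.3-1}, then let $s\rightarrow0$ using $A_{*}\left(s\right)\rightarrow a_{*}$ and the convergence of $s^{-1}U\left(s,r\right)$ to $-\int_{0}^{\infty}\int_{-\infty}^{0}\left(e^{rx}-1\right)\Pi\left(dx-y\right)d\mathsf{E}\tau^{-}\left(y\right)$, and finally identify the limit with the cumulant of a killed subordinator via $\mathsf{E}e^{rX_{\theta_{1}}^{*}}=\left(1-k_{*}\left(r\right)\right)^{-1}$. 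The only difference is one of attribution: the paper quotes these limits from \cite{Husak2011engl}, while you re-derive them through the resolvent identity and monotone convergence, which is a legitimate filling-in of the cited details rather than a different method.
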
 
This proposition offers the following scheme for finding
the m.g.f. for the supremum of a L\'evy process. First, we should find
the distribution of the infimum killed at a rate $s$ and its convolution
with $\tilde{\Pi}\left(x,0\right)=\int_{x}^{\infty}\Pi\left(dx\right).$
Then, under the condition $\mu<0$ after the limit transition as $s$
tends to 0, we can get the L\'evy triplet of $X^{+}.$

We call formula (\ref{eq:2-10}) the unified Pollaczek--Khinchine
formula. To justify the name, we follow the approach from \cite{Kella2011}.

Assume $\Pi_{*}\left(0,+\infty\right)<\infty$ and $a_{*}>0.$ Then,
with the use of the notation $\rho=1-\left(1+\Pi_{*}\left(0,+\infty\right)\right)^{-1},$
$c_{*}=\left(a_{*}\left(1-\rho\right)\right)^{-1},$ and $\hat{\Pi}_{*}\left(r\right)=\int_{\left(0,\infty\right)}e^{rx}\Pi_{*}\left(dx\right)/\Pi_{*}\left(0,+\infty\right),$
relation (\ref{eq:2-10}) can be rewritten as
\begin{multline}
\mathsf{E}e^{rX^{+}}=\frac{1-\rho}{1-\frac{r}{c_{*}}-
\rho\hat{\Pi}_{*}\left(r\right)}=
\frac{c_{*}}{c_{*}-r}\frac{1-\rho}{1-\frac{c_{*}}{c_{*}-r}\rho\hat{\Pi}_{*}\left(r\right)}=\\
=\left(1-\rho\right)
\sum_{n=0}^{\infty}\left(\rho\hat{\Pi}_{*}\left(r\right)\right)^{n}\left(\frac{c_{*}}{c_{*}-r}\right)^{n+1}.\label{eq:2.10a}
\end{multline}
If $X_{t}$ is a compound Poisson process with drift $-1,$ positive
jumps, and $\mathsf{E}X_{1}<0,$ then formula (\ref{eq:2.10a}) can
be reduced to $\mathsf{E}e^{rX^{+}}=\left(1-\rho\right)\sum_{n=0}^{\infty}\left(\rho\int_{0}^{\infty}e^{rx}dF_{0}(x)\right)^{n}$
with $dF_{0}(x)=\left(\Pi_{*}\left(0,+\infty\right)\right)^{-1}\Pi_{*}\left(dx\right).$
After the inversion with respect to $r,$ we get the classical PK
formula (\ref{eq:1.2}).

\section{Matrix-exponential negative jumps}
To be more precise, we use some additional conditions on $\Pi\left(dx\right)$
if $x<0.$ Assume that $\int_{\left(-\infty,0\right)}\Pi\left(dx\right)=\lambda_{-}<\infty,$
and the negative jumps have a distribution with rational characteristic
function (or matrix-exponential distribution). Then, following \cite{Lewis2008},
the L\'evy measure for negative $x$ has the representation
\[
\Pi\left(-\infty,x\right)=\sum_{i=1}^{m}\sum_{j=0}^{k_{i}}\int_{-\infty}^{x}a_{j}^{\left(i\right)}\left(-y\right)^{j}e^{b_{i}y}dy;\qquad x<0,
\]
where $\mathrm{Re}[b_{m}]\geq\ldots\geq\mathrm{Re}[b_{2}]\geq b_{1}>0.$

We study separately two cases:
\begin{description}
\item [{(NS)}] If $\sigma>0$ or $\sigma=0,a<0.$
\item [{(S)}] If $\sigma=0,$ $a\geq0.$
\end{description}
In~\cite{Lewis2008}, it was established that the cumulant equation
$k\left(r\right)=s$ have $N$ roots $-r_{i}\left(s\right)$ in the
half-plane $\mathrm{Re}\left[r\right]<0,$ and $-r_{1}\left(s\right)$
is the unique root in $\left[-b_{1},0\right],$ where
\[
N=\left\{ \begin{array}{l}
\sum_{i=1}^{m}k_{i}+m+1,\mbox{ in case (NS),}\\
\sum_{i=1}^{m}k_{i}+m,\mbox{ in case (S}).
\end{array}\right.
\]
We assume that the roots are ordered in the ascending order of their
real parts
\[
\mathrm{Re}\left[r_{N}\left(s\right)\right]\geq\ldots\geq\mathrm{Re}\left[r_{2}\left(s\right)\right]>r_{1}\left(s\right).
\]
By~\cite{Lewis2008} (see also formula (2.25) \cite{Bratiychuk1990}),
\begin{equation}
\mathsf{E}e^{rX_{\theta_{s}}^{-}}={\displaystyle \frac{\prod_{i=1}^{N}r_{i}\left(s\right)\prod_{i=1}^{m}\left(r+b_{i}\right)^{k_{i}+1}}{\prod_{i=1}^{m}b_{i}^{k_{i}+1}\prod_{i=1}^{N}\left(r+r_{i}\left(s\right)\right)}},\quad\mathrm{Re}\left[r\right]=0.\label{eq:2.5-1}
\end{equation}

Let the cumulant equation have $l$ distinct roots $r_{k}\left(s\right)$
with multiplicity $n_{k},$ $k=\overline{1,l}.$ Then the density
of $X_{\theta_{s}}^{-}$ has the representation
\begin{equation}
P'_{-}\left(s,y\right)=\frac{\partial}{\partial y}P_{-}(s,y)=D_{-}\left(s,y\right)+{\displaystyle \frac{\prod_{i=1}^{N}r_{i}\left(s\right)}{\prod_{i=1}^{m}b_{i}^{k_{i}+1}}}\sum_{k=1}^{l}e^{r_{k}\left(s\right)y}\sum_{j=1}^{n_{k}}\frac{A_{k,j}\left(s\right)}{\left(j-1\right)!}y^{j-1},y<0,\label{eq:2-22}
\end{equation}
where
\[
A_{k,j}\left(s\right)=\frac{1}{\left(n_{k}-j\right)!}\lim_{r\rightarrow-r_{k}\left(s\right)}\left[\frac{d^{\left(n_{k}-j\right)}}{dr^{\left(n_{k}-j\right)}}{\displaystyle \left(\frac{\left(r+r_{k}\left(s\right)\right)^{n_{k}}\prod_{i=1}^{m}\left(r+b_{i}\right)^{k_{i}+1}}{\prod_{i=1}^{N}\left(r+r_{i}\left(s\right)\right)}\right)}\right],
\]
\[
D_{-}\left(s,x\right)=\left\{ \begin{array}{l}
0,\mbox{ in case (NS),}\\
\frac{\prod_{i=1}^{N}r_{i}\left(s\right)}{\prod_{i=1}^{m}b_{i}^{k_{i}+1}}\delta\left(x\right),\mbox{ in case (S}),
\end{array}\right.
\]
and $\delta\left(x\right)$ is the Dirac delta function.

It was shown in~\cite{Lewis2008} that if $\mu=\mathsf{E}X_{1}<0,$
then, as $s\rightarrow0,$ $s^{-1}r_{1}\left(s\right)\rightarrow|\mu|^{-1},$
$r_{i}\left(s\right)\rightarrow r_{i},\mathrm{Re}\left[r_{i}\right]>0,$
$2\leq i\leq N.$ Hence,
\[
{\displaystyle \frac{\prod_{i=1}^{N}r_{i}\left(s\right)}{\prod_{i=1}^{m}b_{i}^{k_{i}+1}}}\rightarrow{\displaystyle \frac{\prod_{i=2}^{N}r_{i}}{|\mu|\prod_{i=1}^{m}b_{i}^{k_{i}+1}}}
\]
and
\begin{multline}
p'_{-}\left(0,y\right)=\frac{\partial}{\partial y}p_{-}\left(0,y\right)
=D_{-}\left(y\right)+{\displaystyle \frac{\prod_{i=2}^{N}r_{i}}{|\mu|\prod_{i=1}^{m}b_{i}^{k_{i}+1}}}\sum_{k=2}^{l}e^{r_{k}y}\sum_{j=1}^{n_{k}}\frac{A_{k,j}\left(0\right)}{\left(j-1\right)!}y^{j-1},y<0,\label{eq:2-23}
\end{multline}
\[
D_{-}\left(y\right)=\left\{ \begin{array}{l}
0,\mbox{ in case (NS),}\\
\frac{|\mu|^{-1}\prod_{i=2}^{N}r_{i}}{\prod_{i=1}^{m}b_{i}^{k_{i}+1}}\delta\left(x\right),\mbox{ in case (S}).
\end{array}\right.
\]
If the multiplicity of a real root $r_{k}\left(s\right)$ is 1, then
the corresponding addends in $P'_{-}\left(s,y\right)$ are simplified
to the exponential function $A_{k}\left(s\right)e^{r_{k}(s)y}.$ If
$r_{k}\left(s\right)=v_{k}\left(s\right)+\imath w_{k}\left(s\right)$
is a complex root of multiplicity 1, then the corresponding addend
in $P'_{-}\left(s,y\right)$ reduces to the product of exponential
and cosine functions $A_{k}\left(s\right)e^{v_{k}\left(s\right)y}\cos\left(w_{k}\left(s\right)y+\varphi_{k}\left(s\right)\right),$
where $A_{k}\left(s\right)=2|\mathrm{res}\left(r{}_{k}\left(s\right)\right)|,$
$\varphi_{k}\left(s\right)=\mathrm{arg}\left(\mathrm{res}\left(r{}_{k}\left(s\right)\right)\right),$
$\mathrm{res}\left(r_{k}\left(s\right)\right)=\left[\left(r+r_{k}\left(s\right)\right)\mathsf{E}e^{rX_{\theta_{s}}^{-}}\right]_{r\rightarrow-r_{k}\left(s\right)}.$

\section{Special cases}
Further, we consider several examples, when the negative jumps have
the Erlang and hyperexponential distributions. In both cases in the
half-plane $\mathrm{Re}\left[r\right]<0,$ the cumulant equation $k\left(r\right)=s$
has $N=d+1$ roots in case (NS) and $N=d$ roots in case (S) with
negative real parts.

\subsection{Hyperexponentially distributed negative jumps}
If a L\'evy process $X_{t}$ have hyperexponential negative jumps, then
all negative roots of the cumulant equation are distinct and real
(see, e.g., \cite{Kou2011} or \cite{Mordecki2002a},) and the distribution
of the killed infimum $X_{\theta_{s}}^{-}$ $ $is mixed exponential.
\begin{cor}\label{cor1} Let a process $X_{t}$ have hyperexponentially
distributed negative jumps. If $\mu=\mathsf{E}X_{1}<0,$ then the
absolute maximum $X^{+}$ is distributed as the subordinator with
L\'evy triplet $\left(a_{*},0,\Pi_{*}\left(dx\right)\right)$ killed
rate 1, where, in case (NS),
\begin{equation}
a_{*}=\frac{\sigma^{2}}{2|\mu|}\left(1-\sum_{k=2}^{d+1}\frac{\prod_{i=1}^{d}\left(1-r_{k}/b_{i}\right)}{\prod_{i=2,i\neq k}^{N}\left(1-r_{k}/r_{i}\right)}\right),\label{eq:2-26}
\end{equation}
\[
\Pi_{*}\left(dx\right)=|\mu|^{-1}\left(\tilde{\Pi}\left(x,0\right)-\sum_{k=2}^{d+1}\frac{\prod_{i=1}^{d}\left(1-r_{k}/b_{i}\right)}{\prod_{i=2,i\neq k}^{d+1}\left(1-r_{k}/r_{i}\right)}\tilde{\Pi}\left(x,r_{k}\right)\right)dx,
\]
and, in case (S),
\begin{equation}
a_{*}=a|b_{1}\mu|^{-1}\prod_{i=2}^{d}\left(r_{i}/b_{i}\right),\label{eq:2-27}
\end{equation}
\[
\Pi_{*}\left(dx\right)=\frac{1}{|\mu|}\left(\frac{1}{b_{1}}\prod_{i=2}^{d}\frac{r_{i}}{b_{i}}{\Pi}\left(dx\right)+\left(\tilde{\Pi}\left(x,0\right)-\sum_{k=2}^{d}\frac{\prod_{i=1}^{d}\left(1-r_{k}/b_{i}\right)}{\prod_{i=2,i\neq k}^{d}\left(1-r_{k}/r_{i}\right)}\tilde{\Pi}\left(x,r_{k}\right)\right)dx\right).
\]
\end{cor}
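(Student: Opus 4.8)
The plan is to specialize Proposition~\ref{proposition1} and the limiting density~\eqref{eq:2-23} to the hyperexponential case and then carry out the convolution defining $\Pi_*$ explicitly. For hyperexponential jumps there are no polynomial factors, so $k_i=0$, $m=d$, every root $r_k(s)$ is real and simple ($n_k=1$), and $l=N$ with $N=d+1$ in case~(NS), $N=d$ in case~(S). First I would read off the killed-infimum density from~\eqref{eq:2.5-1} by partial fractions. The right-hand side of~\eqref{eq:2.5-1} is a ratio with numerator $\prod_{i=1}^d(r+b_i)$ of degree $d$; in case~(NS) the denominator has degree $d+1$, so the function is proper and its residues at $-r_k(s)$ give $P'_-(s,y)=\sum_{k}\alpha_k(s)e^{r_k(s)y}$ on $y<0$, whereas in case~(S) numerator and denominator have equal degree $d$, and the non-proper part contributes the atom $\mathsf{P}\{X_{\theta_s}^-=0\}=\prod_{i=1}^d r_i(s)/\prod_i b_i$ at $y=0$ recorded in $D_-(s,y)$. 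Here the residue is $\alpha_k(s)=\dfrac{\prod_i r_i(s)}{\prod_i b_i}\cdot\dfrac{\prod_i(b_i-r_k(s))}{\prod_{j\ne k}(r_j(s)-r_k(s))}$.

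Next I would pass to the limit $s\to0$ term by term in $p'_-(0,y)=\lim_{s\to0}s^{-1}P'_-(s,y)$. The delicate point---and the one I expect to be the main obstacle---is the root $r_1(s)$, which tends to $0$ while $s^{-1}r_1(s)\to|\mu|^{-1}$. A direct estimate shows $\alpha_1(s)\sim r_1(s)$, so that $\lim_{s\to0}s^{-1}\alpha_1(s)=|\mu|^{-1}$ and the $k=1$ summand, for which $e^{r_1(s)y}\to1$, contributes a \emph{constant} $|\mu|^{-1}$ to $p'_-(0,y)$. This constant is precisely what will generate the $\tilde{\Pi}(x,0)$ term in the final formulas, so it must be tracked carefully rather than absorbed into the vanishing prefactor. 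For $k\ge2$ the factor $(r_1(s)-r_k(s))\to-r_k$ enters the denominator, and after normalising numerator and denominator by $\prod b_i$ and by $\prod_{i\ge2}r_i$ one obtains $\lim_{s\to0}s^{-1}\alpha_k(s)=-\,|\mu|^{-1}\dfrac{\prod_{i=1}^d(1-r_k/b_i)}{\prod_{i=2,i\ne k}^N(1-r_k/r_i)}$.

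With $p'_-(0,y)$ in hand I would compute $\Pi_*(dx)=\int_{-\infty}^0\Pi(dx-y)\,dp_-(0,y)$ using the elementary identity $\int_{-\infty}^0\Pi(dx-y)e^{r_k y}\,dy=\tilde{\Pi}(x,r_k)\,dx$, which follows from the substitution $z=x-y$ together with the definition $\tilde{\Pi}(x,u)=\int_x^\infty e^{u(x-z)}\Pi(dz)$. Applying this to the constant term (the case $r_k=0$) yields $|\mu|^{-1}\tilde{\Pi}(x,0)\,dx$, and to each exponential term yields the corresponding $\tilde{\Pi}(x,r_k)$ summand; collecting these, the signs from the previous step produce exactly the bracketed difference in $\Pi_*(dx)$ for case~(NS). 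In case~(S) the atom of $p_-(0,\cdot)$ at $y=0$ contributes an extra piece by evaluating $\Pi(dx-y)$ at $y=0$, namely $\dfrac{\prod_{i=2}^d r_i}{\prod_i b_i}|\mu|^{-1}\Pi(dx)=|b_1\mu|^{-1}\prod_{i=2}^d(r_i/b_i)\,\Pi(dx)$, which is the first term of the stated $\Pi_*$.

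Finally I would treat the drift. In case~(NS) with $\sigma>0$ one has $a_*=\lim_{s\to0}(2s)^{-1}\sigma^2 P'_-(s,0-)=\tfrac{\sigma^2}{2}p'_-(0,0-)$; evaluating the limiting density at $y=0-$ (so every exponential equals $1$) and inserting the coefficients from the second step gives~\eqref{eq:2-26}, while the sub-case $\sigma=0,\,a<0$ gives $a_*=0$ through $\max\{0,a\}=0$, consistent with~\eqref{eq:2-26}. In case~(S) the drift comes from the atom: $a_*=a\lim_{s\to0}s^{-1}\mathsf{P}\{X_{\theta_s}^-=0\}=a\,|\mu|^{-1}\prod_{i=2}^d r_i/\prod_i b_i$, which rearranges to~\eqref{eq:2-27}. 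The only genuine work is the residue-and-limit bookkeeping around $r_1(s)$; once the constant $|\mu|^{-1}$ is correctly identified, both the drift and the Lévy measure follow by direct substitution into Proposition~\ref{proposition1}.
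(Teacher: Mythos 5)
Your proposal is correct and follows essentially the same route as the paper: specializing the partial-fraction representation of the killed-infimum density (the paper's formula \eqref{eq:2-22}, which is exactly the residue expansion of \eqref{eq:2.5-1} you carry out) to distinct real roots, passing to the limit $s\to0$ with the $k=1$ term contributing the constant $|\mu|^{-1}$ and the atom surviving in case (S), and then substituting into Proposition~\ref{proposition1} via the convolution identity producing $\tilde{\Pi}(x,r_k)$. The paper states these steps tersely as \eqref{eq:2-24}--\eqref{eq:2-25}; your write-up merely makes the residue and limit bookkeeping explicit.
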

\begin{proof} As all $r_{k}\left(s\right)$ are distinct
and real, formula (\ref{eq:2-22}) yields
\begin{equation}
P'_{-}\left(s,y\right)=D_{-}\left(s,y\right)+{\displaystyle \frac{\prod_{i=1}^{N}r_{i}\left(s\right)}{\prod_{i=1}^{d}b_{i}}}\sum_{k=1}^{N}\frac{\prod_{i=1}^{d}\left(b_{i}-r_{k}\left(s\right)\right)}{\prod_{i=1,i\neq k}^{N}\left(r_{i}\left(s\right)-r_{k}\left(s\right)\right)}e^{r_{k}\left(s\right)y},y<0,\label{eq:2-24}
\end{equation}
where
\[
D_{-}\left(s,x\right)=\left\{ \begin{array}{l}
0,\mbox{ in case (NS),}\\
\prod_{i=1}^{d}\left(r_{i}\left(s\right)/b_{i}\right)\delta\left(x\right),\mbox{ in case (S}).
\end{array}\right.
\]
If $\mu<0,$ then
\begin{equation}
p'_{-}\left(0,y\right)=D_{-}\left(y\right)+|\mu|^{-1}\left(1-\sum_{k=2}^{N}\frac{\prod_{i=1}^{d}\left(1-r_{k}/b_{i}\right)}{\prod_{i=2,i\neq k}^{N}\left(1-r_{k}/r_{i}\right)}e^{r_{k}y}\right),y<0,\label{eq:2-25}
\end{equation}
where
\[
D_{-}\left(x\right)=\left\{ \begin{array}{l}
0,\mbox{ in case (NS),}\\
|b_{1}\mu|^{-1}\prod_{i=2}^{d}\left(r_{i}/b_{i}\right)\delta\left(x\right),\mbox{ in case (S}).
\end{array}\right.
\]
Combining formula (\ref{eq:2-25}) with Proposition \ref{proposition1},
we get (\ref{eq:2-26}) and (\ref{eq:2-27}). Note that $\tilde{\Pi}(x,0)$
is just $\int_{x}^{\infty}\Pi(dy),x>0.$\end{proof} \begin{exm}
If $d=2$ in Corollary~\ref{cor1}, then the parameters of $X^{*}$
in case (NS) are
\[
a_{*}=\frac{\sigma^{2}}{2|\mu|}\frac{r_{2}r_{3}}{b_{1}b_{2}},
\]
\begin{multline*}
\Pi_{*}\left(dx\right)=\frac{1}{|\mu|}\left[\tilde{\Pi}\left(x,0\right)+\frac{r_{3}\left(r_{2}-b_{1}\right)\left(b_{2}-r_{2}\right)}{b_{1}b_{2}\left(r_{3}-r_{2}\right)}\tilde{\Pi}\left(x,r_{2}\right)+\right.\\
\left.+\frac{r_{2}\left(r_{3}-b_{1}\right)\left(r_{3}-b_{2}\right)}{b_{1}b_{2}\left(r_{3}-r_{2}\right)}\tilde{\Pi}\left(x,r_{3}\right)\right]dx.
\end{multline*}
In case (S), we have
\[
a_{*}=\frac{ar_{2}}{|\mu|b_{1}b_{2}},
\]
\[
\Pi_{*}\left(dx\right)=\frac{r_{2}}{|\mu|b_{1}b_{2}}\left[\Pi\left(dx\right)+\left(\frac{b_{1}b_{2}}{r_{2}}\tilde{\Pi}\left(x,0\right)+\frac{\left(r_{2}-b_{1}\right)\left(b_{2}-r_{2}\right)}{r_{2}}\tilde{\Pi}\left(x,r_{2}\right)\right)dx\right].
\]
\end{exm}

\subsection{Erlang-distributed negative jumps}
If the negative jumps have the Erlang distribution, then we will use
a matrix representation alternative to (\ref{eq:2-22}) for the inversion
of m.g.f. of $X_{\theta_{s}}^{-}$ (for details, see \cite{Asmussen2010}):
if
\[
\int_{-\infty}^{0}e^{rx}f\left(x\right)dx=\frac{\sum_{k=1}^{n}\alpha_{k}r^{k-1}}{\sum_{k=1}^{n}t_{k}r^{n-k}+r^{n}},
\]
then
\begin{equation}
f\left(x\right)=\boldsymbol{\alpha}e^{\mathbf{T}x}\mathbf{t},\;\; x<0,\label{eq:2-27a}
\end{equation}
where $\boldsymbol{\alpha}=\left(\alpha_{1},\ldots,\alpha_{n}\right),$
$\mathbf{t}=\left(0,\ldots,0,1\right)^{\top},$ $\mathbf{T}=\left(\begin{array}{cccc}
0 & -1 & \ldots & 0\\
\vdots & \vdots & \ddots & \vdots\\
0 & 0 & \ldots & -1\\
t_{n} & t_{n-1} & \ldots & t_{1}
\end{array}\right).$ \begin{cor}\label{cor2} Let the process $X_{t}$ have Erlang-distributed
negative jumps. If $\mu=\mathsf{E}X_{1}<0,$ then the absolute maximum
$X^{+}$ is distributed as the subordinator with L\'evy triplet $\left(a_{*},0,\Pi_{*}\left(dx\right)\right)$
killed rate 1, where, in case (NS), we have
\[
a_{*}=\frac{\sigma^{2}}{2|\mu|}\prod_{i=2}^{d+1}\left(\frac{r_{i}}{b}\right),
\]
\begin{equation}
\Pi_{*}\left(dx\right)=|\mu|^{-1}\prod_{i=2}^{d+1}\left(\frac{r_{i}}{b}\right)
\boldsymbol{\alpha}\int_{x}^{\infty}e^{\mathbf{T}\left(x-y\right)}\Pi\left(dy\right)\mathbf{t}dx,\label{eq:2-30}
\end{equation}
$\alpha_{k}=\left(_{k-1}^{d}\right)b^{d+1-k},$ $t_{k}=\sum_{1\leq i_{1}<\ldots<i_{k}\leq d+1}r_{i_{1}}\ldots r_{i_{k}},$
$\left(_{k-1}^{d}\right)$ are binomial coefficients, $k=\overline{1,d+1}.$

In case (S),
\[
a_{*}=a|b\mu|^{-1}\prod_{i=2}^{d}\left(r_{i}/b\right),
\]
\begin{equation}
\Pi_{*}\left(dx\right)=|b\mu|^{-1}\prod_{i=2}^{d}\left(\frac{r_{i}}{b}\right)\left(\Pi\left(dx\right)+\boldsymbol{\alpha}\int_{x}^{\infty}e^{\mathbf{T}\left(x-y\right)}\Pi\left(dy\right)\mathbf{t}dx\right),\label{eq:2-31}
\end{equation}
$\alpha_{k}=\left(_{k-1}^{d}\right)b^{d+1-k}-t_{d+1-k},$ $t_{k}=\sum_{1\leq i_{1}<\ldots<i_{k}\leq d}r_{i_{1}}\ldots r_{i_{k}},$
$k=\overline{1,d}.$ \end{cor}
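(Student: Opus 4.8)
The plan is to specialize the general scheme to the Erlang case ($m=1$, $b_1=b$) and then feed the outcome into Proposition~\ref{proposition1}, but to invert the killed-infimum transform by means of the matrix-exponential representation~\eqref{eq:2-27a} instead of the partial-fraction expansion~\eqref{eq:2-22}, since for Erlang jumps the negative roots may be repeated or complex. First I would write~\eqref{eq:2.5-1} as
\[
\mathsf{E}e^{rX_{\theta_s}^-}=\frac{\prod_{i=1}^N r_i(s)}{b^{d}}\cdot\frac{(r+b)^{d}}{\prod_{i=1}^N\left(r+r_i(s)\right)},
\]
noting that the polynomial factor is $(r+b)^{d}$ in both regimes (because $k_1+1=d$), while $N=d+1$ in case (NS) and $N=d$ in case (S). Inverting the rational ``density part'' via~\eqref{eq:2-27a} gives $P'_-(s,y)=\bigl(\prod_{i=1}^N r_i(s)/b^{d}\bigr)\,\boldsymbol{\alpha}e^{\mathbf{T}y}\mathbf{t}$ for $y<0$, with $\boldsymbol{\alpha},\mathbf{T}$ built from the coefficients below.

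The key bookkeeping is to cast the density part into the exact proper-rational shape $\sum_k\alpha_k r^{k-1}/(\sum_k t_k r^{n-k}+r^n)$ demanded by~\eqref{eq:2-27a}. In case (NS) the numerator $(r+b)^{d}$ already has degree $d=N-1$, so $\boldsymbol{\alpha}$ is read off from $(r+b)^{d}=\sum_{k=1}^{d+1}\binom{d}{k-1}b^{d+1-k}r^{k-1}$, yielding $\alpha_k=\binom{d}{k-1}b^{d+1-k}$, while the $t_k$ are the elementary symmetric functions of the roots, i.e.\ the coefficients of $\prod_{i=1}^{N}(r+r_i(s))$. In case (S) numerator and denominator both have degree $d$, so one first splits off the constant $\prod_{i=1}^{d}r_i(s)/b^{d}$; this constant is exactly the atom $D_-(s,\cdot)$ of $X_{\theta_s}^-$ at the origin, and the remaining proper part has numerator $(r+b)^{d}-\prod_{i=1}^{d}(r+r_i(s))$, whose coefficients are $\alpha_k=\binom{d}{k-1}b^{d+1-k}-t_{d+1-k}$. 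This is the origin of the two displayed lists of $\alpha_k,t_k$.

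It then remains to let $s\to0$ and substitute into Proposition~\ref{proposition1}. Using $s^{-1}r_1(s)\to|\mu|^{-1}$ and $r_i(s)\to r_i$ ($i\ge2$), the prefactor $s^{-1}\prod_{i=1}^N r_i(s)/b^{d}$ tends to $|\mu|^{-1}\prod_{i=2}^{N}(r_i/b)$, and $p'_-(0,y)=\lim_{s\to0}s^{-1}P'_-(s,y)$ retains the matrix-exponential form, the limiting companion matrix now carrying the eigenvalue $r_1=0$, which is precisely the constant mode producing the $\tilde{\Pi}(x,0)$ term. For the drift I would use $\boldsymbol{\alpha}\mathbf{t}=\alpha_{d+1}=1$: in case (NS), $A_*(s)=\tfrac{\sigma^{2}}{2}s^{-1}P'_-(s,0)\to\tfrac{\sigma^{2}}{2}p'_-(0,0)=\tfrac{\sigma^{2}}{2|\mu|}\prod_{i=2}^{d+1}(r_i/b)$ (and the $\sigma=0,a<0$ subcase gives $a_*=0$ automatically); in case (S), $A_*(s)$ equals $\max\{0,a\}=a$ times the limiting atom mass $|b\mu|^{-1}\prod_{i=2}^{d}(r_i/b)$. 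Finally, for $\Pi_*(dx)=\int_{-\infty}^0\Pi(dx-y)\,dp_-(0,y)$ I would apply the substitution $z=x-y$ with Fubini, rewriting this convolution as $\bigl(\int_x^\infty p'_-(0,x-z)\,\Pi(dz)\bigr)dx$ and inserting $p'_-(0,x-z)=(\,\prod_{i=2}^{N}(r_i/b)/|\mu|\,)\boldsymbol{\alpha}e^{\mathbf{T}(x-z)}\mathbf{t}$; this reproduces~\eqref{eq:2-30}, and in case (S) the atom of $dp_-(0,y)$ at $y=0$ adds the extra summand $\Pi(dx)$ appearing in~\eqref{eq:2-31}.

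The main obstacle is this last change-of-variables/Fubini step that compresses the convolution into the matrix form, together with verifying that the matrix exponential with its zero eigenvalue ($r_1=0$) correctly yields the $\int_x^\infty\Pi(dy)=\tilde{\Pi}(x,0)$ mode; once the polynomial identity for $\alpha_k$ in case (S) is checked, the remaining limit computations are routine.
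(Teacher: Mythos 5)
Your proposal is correct and follows essentially the same route as the paper's proof: specialize the factor \eqref{eq:2.5-1} to the Erlang case, invert it with the matrix-exponential formula \eqref{eq:2-27a} (splitting off the constant/atom part in case (S)), let $s\to0$ using $s^{-1}r_{1}\left(s\right)\rightarrow|\mu|^{-1}$, and substitute into Proposition~\ref{proposition1}. Your explicit bookkeeping of $\alpha_{k},t_{k}$, the treatment of the zero eigenvalue $r_{1}=0$, and the Fubini/convolution step are correct elaborations of details the paper leaves implicit.
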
 \begin{proof} Using the inversion
formula (\ref{eq:2-27a}) from (\ref{eq:2.5-1}), we deduce in case
(NS) that
\begin{equation}
P'_{-}\left(s,y\right)=r_{1}\left(s\right)\prod_{i=2}^{d+1}\frac{r_{i}\left(s\right)}{b}
\boldsymbol{\alpha}e^{\mathbf{T}\left(s\right)y}\mathbf{t},y<0.\label{eq:2-28}
\end{equation}
In case (S),
\begin{equation}
P'_{-}\left(s,y\right)=r_{1}\left(s\right)
\frac{\prod_{i=2}^{d}r_{i}\left(s\right)}{b^{d}}\left(\delta\left(y\right)
+\boldsymbol{\alpha}e^{\mathbf{T}\left(s\right)y}\mathbf{t}\right),y<0.\label{eq:2-29}
\end{equation}
As $s$ approaches 0, we can get the representation for $p'_{-}\left(0,y\right)$
from (\ref{eq:2-28}) and (\ref{eq:2-29}). From Proposition~\ref{proposition1},
we deduce (\ref{eq:2-30}) and (\ref{eq:2-31}).\end{proof} To expand
the matrix exponents in (\ref{eq:2-30}) and (\ref{eq:2-31}), we
should know the values of their multiplicities and whether the roots
$r_{i}$ are real. \begin{exm} Assume that $d=2$ in Corollary~\ref{cor2}.
In case (S) as $s$ approaches 0: $r_{1}\left(s\right)\rightarrow0,r_{2}\left(s\right)\rightarrow r_{2}>0.$
Then $\boldsymbol{\alpha}=\left(b^{2},2b-r_{2}\right),\mathbf{T}=\left(\begin{array}{cc}
0 & -1\\
0 & r_{2}
\end{array}\right).$ Hence,
\[
p'_{-}\left(0,y\right)=\frac{r_{2}}{|\mu|b^{2}}\left(\delta\left(x\right)
+\frac{b^{2}}{r_{2}}-\frac{\left(b-r_{2}\right)^{2}}{r_{2}}e^{r_{2}y}\right)
\]
and
\[
a_{*}=\frac{ar_{2}}{|\mu|b^{2}},
\]
\begin{multline*}
\Pi_{*}\left(dx\right)=\frac{r_{2}}{|\mu|b^{2}}\left(\Pi\left(dx\right)
+\boldsymbol{\alpha}\int_{x}^{\infty}e^{\mathbf{T}\left(x-y\right)}\Pi\left(dy\right)\mathbf{t}dx\right)=\\
=\frac{r_{2}}{|\mu|b^{2}}\left(\Pi\left(dx\right)
+\left(\frac{b^{2}}{r_{2}}\tilde{\Pi}\left(x,0\right)-\frac{\left(b-r_{2}\right)^{2}}{r_{2}}\tilde{\Pi}\left(x,r_{2}\right)\right)dx\right).
\end{multline*}

In case (NS) as $s$ approaches 0: $r_{1}\left(s\right)\rightarrow0,r_{2}\left(s\right)\rightarrow r_{2},$
$r_{3}\left(s\right)\rightarrow r_{3}.$ Then
\[
a_{*}=\frac{\sigma^{2}}{2|\mu|}\frac{r_{2}r_{3}}{b^{2}},
\]
$\boldsymbol{\alpha}=\left(b^{2},2b,1\right),\mathbf{T}=\left(\begin{array}{ccc}
0 & -1 & 0\\
0 & 0 & -1\\
0 & r_{2}r_{3} & \left(r_{2}+r_{3}\right)
\end{array}\right).$ In this case, $\Pi_{*}\left(dx\right)$ can contain, along with the
integral transform $\tilde{\Pi}\left(x,u\right),$ also the integral
transform
\[
B\left(x,u\right)=\int_{x}^{\infty}\left(x-z\right)e^{u\left(x-z\right)}\Pi\left(dz\right)
\]
and integral transforms $C_{1}\left(x,v,w\right)=\int_{x}^{\infty}e^{v\left(x-z\right)}\cos\left(w\left(x-z\right)\right)\Pi\left(dz\right),$
$C_{2}\left(x,v,w\right)=\int_{x}^{\infty}e^{v\left(x-z\right)}\sin\left(w\left(x-z\right)\right)\Pi\left(dz\right).$
More precisely, we have 3 possibilities:

1) $r_{2,3}$ are real and distinct, then
\[
\boldsymbol{\alpha}e^{\mathbf{T}x}\mathbf{t}=\frac{b^{2}}{r_{2}r_{3}}+\frac{e^{r_{3}x}r_{2}\left(b-r_{3}\right)^{2}-e^{r_{2}x}r_{3}\left(b-r_{2}\right)^{2}}{r_{2}r_{3}\left(r_{3}-r_{2}\right)},
\]
so $\Pi_{*}\left(dx\right)=|\mu|^{-1}\left(\tilde{\Pi}\left(x,0\right)-\frac{r_{3}\left(b-r_{2}\right)^{2}}{b^{2}\left(r_{3}-r_{2}\right)}\tilde{\Pi}\left(x,r_{2}\right)+\frac{r_{2}\left(b-r_{3}\right)^{2}}{b^{2}\left(r_{3}-r_{2}\right)}\tilde{\Pi}\left(x,r_{3}\right)\right)dx.$
Here, we get the same formula as for the hyperexponential distribution
with $b_{1}=b_{2}=b.$

2) $r_{2,3}$ are real and equal, then
\[
\boldsymbol{\alpha}e^{\mathbf{T}x}\mathbf{t}=\frac{b^{2}}{r_{2}^{2}}\left(1+e^{r_{2}x}\left(\frac{r_{2}^{2}-b^{2}}{b^{2}}+\frac{\left(b-r_{2}\right)^{2}r_{2}}{b^{2}}x\right)\right),
\]
$\Pi_{*}\left(dx\right)=|\mu|^{-1}\left(\tilde{\Pi}\left(x,0\right)-\frac{r_{2}^{2}-b^{2}}{b^{2}}\tilde{\Pi}\left(x,r_{2}\right)+\frac{\left(b-r_{2}\right)^{2}}{b^{2}}B\left(x,r_{2}\right)\right)dx.$

3) $r_{2,3}$ are complex-valued: $r_{2,3}=v\pm\imath w,$ then
\[
\boldsymbol{\alpha}e^{\mathbf{T}x}\mathbf{t}=\frac{b^{2}}{v^{2}+w^{2}}+
\]
\[
+e^{vx}\left(\frac{\left(v^{2}+w^{2}-b^{2}\right)}{v^{2}+w^{2}}\cos(wx)+\frac{\left(v+b\left(\frac{bv}{v^{2}+w^{2}}-2\right)\right)}{w}\sin(wx)\right),
\]
$\Pi_{*}\left(dx\right)=|\mu|^{-1}\left(\tilde{\Pi}\left(x,0\right)dx+\frac{|r_{2}|^{2}-b^{2}}{b^{2}}C_{1}\left(x,v,w\right)+\frac{b^{2}v+|r_{2}|^{2}\left(v-2b\right)}{b^{2}w}C_{2}\left(x,v,w\right)\right)dx.$
\end{exm} \begin{exm} In conclusion, we consider a numerical example,
when a process $X_{t}$ have L\'evy triplet $\left(a,\sigma,\Pi\left(dx\right)\right),$
where
\[
\Pi\left(dx\right)=\begin{cases}
\lambda_{+}f_{+}\left(x\right) & x>0,\\
\lambda_{-}f_{-}\left(x\right) & x<0,
\end{cases}=\begin{cases}
{\displaystyle \lambda_{+}\frac{2\beta}{\pi}e^{-\frac{x^{2}\beta^{2}}{\pi}}} & x>0,\\
\lambda_{-}\left(1+\frac{1}{4\pi^{2}}\right)\left(1-\cos\left(2\pi x\right)\right)e^{x} & x<0,
\end{cases}
\]
i.e., the positive jumps are half-normal$\left(\beta\right)$ distributed,
and the negative jumps are matrix-exponential distributed with $b_{1}=1,$
$b_{2,3}=1\pm2\pi\imath$ (a special case of the matrix-exponential
distribution, which is not a phase-type distribution \cite{Asmussen2010}).
The cumulant of $X_{t}$ is
\begin{multline*}
k\left(r\right)=ar+\frac{\sigma^{2}}{2}r^{2}+\lambda_{+}\left(e^{\frac{\pi r^{2}}{4\beta^{2}}}\left(1+\mathrm{erf}\left(\frac{\sqrt{\pi}r}{2\beta}\right)\right)-1\right)+\\
+\lambda_{-}\left(\frac{1+4\pi^{2}}{\left(1+r\right)\left(4\pi^{2}+\left(1+r\right)^{2}\right)}-1\right),
\end{multline*}
where $\mathrm{erf}\left(x\right)=\frac{2}{\sqrt{\pi}}\int_{0}^{z}e^{-t^{2}}dt.$

For $a=0.2,$ $\sigma=2,$ $\lambda_{+}=2,$ $\beta=1,$ $\lambda_{-}=4,$
$m=\mathsf{E}X_{1}=\frac{49+36\pi^{2}}{-5-20\pi^{2}}<0,$ the cumulant
equation $k\left(r\right)=0$ has the roots $-r_{1}=0,$ $-r_{2,3}\approx1.023\pm6.290\imath,$
$-r_{4}\approx2.159$ in the half-plane $\mathrm{Re}\left[r\right]\leq0$
and, by formula (\ref{eq:2-23}),
\[
p'_{-}\left(0,x\right)\approx0.501+0.582e^{2.159x}+e^{1.023x}\left(0.002\cos\left(6.290x\right)+0.008\sin\left(6.290x\right)\right).
\]
Hence, $a_{*}=\frac{\sigma^{2}}{2}p'_{-}\left(0,0\right)\approx2.169$
and $\Pi_{*}\left(dx\right)=\int_{x}^{\infty}p'_{-}\left(0,x-y\right)\lambda_{+}f_{+}\left(y\right)dy$
can be expressed in terms of $\mathrm{erf}\left(x\right).$ Using
formula (\ref{eq:2.10a}), we can represent $X^{+}$ as the sum of
a geometrically distributed number $\nu\sim\mathrm{G}\left(1-\rho\right),$
exponentially distributed random variables $\xi_{0},\ldots,\xi_{n},\ldots\sim\mathrm{Exp}\left(c_{*}\right),$
and random variables $\eta_{1},\ldots,\eta_{n},\ldots\sim F{}_{0},$
where $1-\rho\approx0.418,$ $c_{*}\approx1.104,$ $F'_{0}(x)dx=\left(\Pi_{*}\left(0,+\infty\right)\right)^{-1}\Pi_{*}\left(dx\right)$
(see Fig. 1). That is,
\[
X^{+}=\xi_{0}+\sum_{n=1}^{\nu}\left(\xi_{n}+\eta_{n}\right).
\]
\begin{figure}\label{pic1}
\centering \includegraphics[scale=0.7]{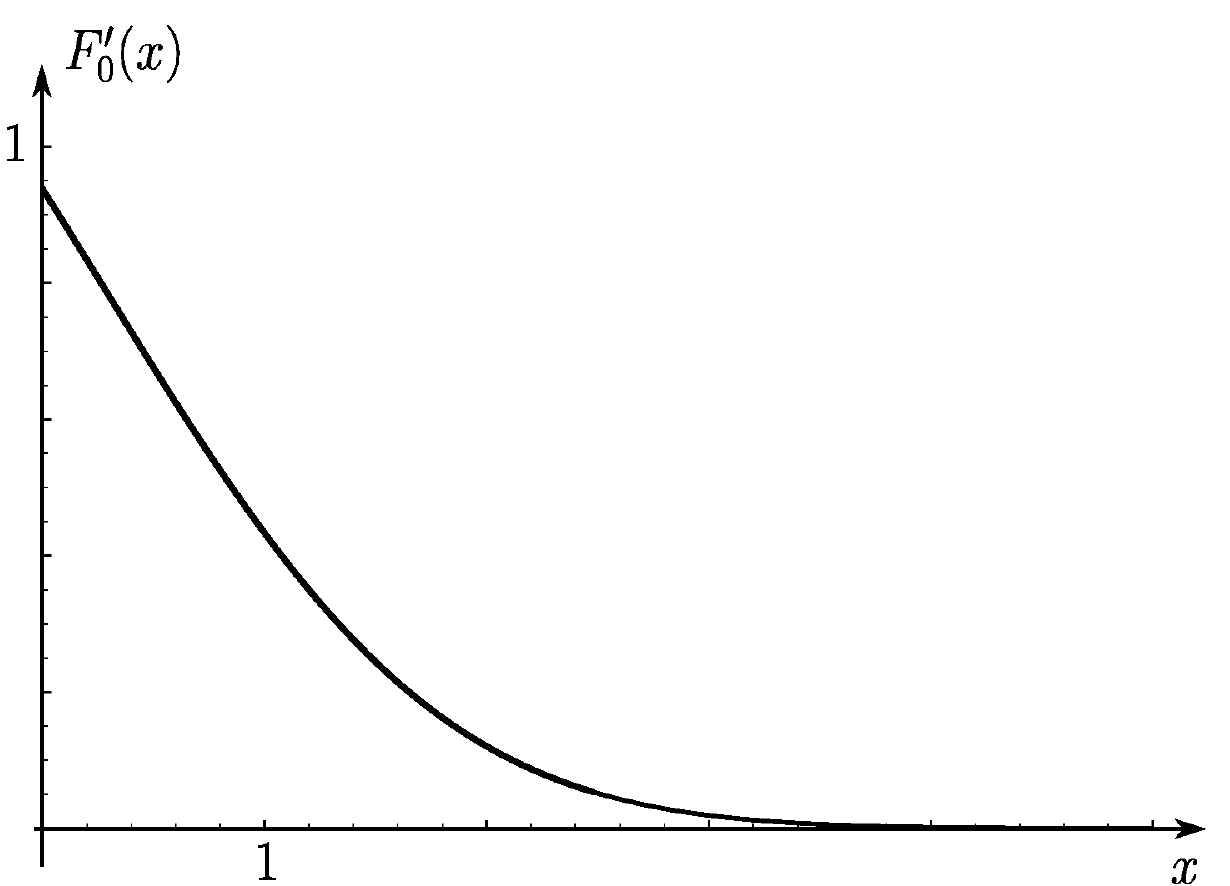} \caption{Density of $\eta_{i}$}
\end{figure}
\end{exm}\small

\end{document}